\newtheorem*{maintheorem*}{Main Theorem}
\newtheorem{theorem}{Theorem}[section]
\newtheorem{prop}[theorem]{Proposition}
\newtheorem{lemma}[theorem]{Lemma}
\newtheorem{cor}[theorem]{Corollary}
\theoremstyle{definition}
\newtheorem{example}[theorem]{Example}
\numberwithin{equation}{section}
\newcommand{\nn}{\mathbb{N}}
\newcommand{\qq}{\mathbb{Q}}
\newcommand{\rr}{\mathbb{R}}
\newcommand{\zz}{\mathbb{Z}}
\keywords{Localization, semidomain, unique factorization semidomain, half-factorial semidomain, semiring}
\subjclass[2010]{Primary: 16Y60, 13B30; Secondary: 13A05}
\begin{document}
	
	\mbox{}
	\title{Localization of unique factorization semidomains}
	
	\author{Victor Gonzalez}
	\address{Miami Dade College}
	\email{vmichelg@mdc.edu}
	
	\author{Harold Polo}
	\address{University of California, Irvine}
	\email{harold.polo@uci.edu}
	
	\author{Pedro Rodriguez}
	\address{Department of Mathematics\\Clemson University\\Clemson, SC 29634}
	\email{pedror@clemson.edu}

\date{\today}

\begin{abstract}
	A semidomain is a subsemiring of an integral domain. Within this class, a unique factorization semidomain (UFS) is characterized by the property that every nonzero, nonunit element can be factored into a product of finitely many prime elements. In this paper, we investigate the localization of semidomains, focusing specifically on UFSs. We demonstrate that the localization of a UFS remains a UFS, leading to the conclusion that a UFS is either a unique factorization domain or is additively reduced. In addition, we provide an example of a subsemiring $\mathfrak{S}$ of $\mathbb{R}$ such that $(\mathfrak{S}, \cdot)$ and $(\mathfrak{S}, +)$ are both half-factorial, shedding light on a conjecture posed by Baeth, Chapman, and Gotti.
\end{abstract}
\medskip

\maketitle


\bigskip
\section{Introduction}
\label{sec:intro}
\smallskip

The first systematic study of factorizations in integral domains was conducted in 1990 by Dan D. Anderson, David F. Anderson, and Muhammad Zafrullah \cite{AAZ90}. They introduced and analyzed various factorization properties and examined how these properties ascend from integral domains to their polynomial and power series extensions. This foundational work spurred a significant amount of subsequent research in the field \cite{AAZ92, fHK92, DDABM1996, geroldinger, mR93}. 

A subset of an integral domain containing $0$ and $1$ and closed under both addition and multiplication is called a \textit{semidomain}. One can think of a semidomain as an integral domain in which the condition of having additive inverses is no longer required. The class of semidomains provides a unified framework for studying both integral domains and positive \emph{positive semirings} (i.e., subsemirings of the positive cone of $\mathbb{R}$), which are objects that have generated much interest lately (see \cite{BCG21} and references therein). Motivated by this, the algebraic and factorization properties of semidomains have received some attention during the past few years~\cite{CF19,CCMS09, chh2022,foxgoelliao,gottipolo, gottipolo2,vP15}. Moreover, additively reduced semidomains (i.e., semidomains whose only additive inverse is $0$) provide a natural algebraic setting for studying polynomial analogues of the Goldbach conjecture as the lack of additive inverses aligns with Goldbach’s focus on positive elements~\cite{kaplan, liao}.

In this paper, we study the localization of semidomains, focusing specifically on unique factorization semidomains (UFSs) (i.e., semidomains for which every nonzero, nonunit element can be factored into a product of finitely many prime elements). We demonstrate that the localization of a UFS remains a UFS, leading to the conclusion that a UFS is either a unique factorization domain or is additively reduced. In addition, we provide an example of a subsemiring $\mathfrak{S}$ of $\mathbb{R}$ such that $(\mathfrak{S}, \cdot)$ and $(\mathfrak{S}, +)$ are both half-factorial, shedding light on a conjecture posed by Baeth, Chapman, and Gotti~\cite[Conjecture 7.7]{BCG21}.

\smallskip
\section{Preliminaries}
\label{sec:background}
\smallskip

We begin by reviewing some standard notation and terminology that will be used throughout. For further reference on factorization theory and semiring theory, consult the monographs \cite{GH06} and \cite{JG1999}, respectively. We denote the sets of positive integers, integers, rational numbers, and real numbers by $\nn$, $\zz$, $\qq$, and $\rr$, respectively, and define $\nn_0 := {0} \cup \nn$. Additionally, for any $r \in \rr$ and $S \subseteq \rr$, we define $S_{>r} \coloneqq \{s \in S \mid s > r\}$, and $S_{\geq r}$ similarly. For $m, n \in \nn_0$, we set $\llbracket m,n \rrbracket \coloneqq \{k \in \zz \mid m \leq k \leq n\}$. Given $q \in \qq_{>0}$, we can write $q$ uniquely as $q = d^{-1}n$, where $n, d \in \nn$ and $\gcd(n,d) = 1$. We refer to $n$ and $d$ as the \emph{numerator} and \emph{denominator} of $q$, denoted by $\mathsf{n}(q)$ and $\mathsf{d}(q)$, respectively.

\subsection{Monoids}

In this paper, we define a \emph{monoid}\footnote{Note that the standard definition of a monoid does not assume cancellative and commutative conditions.} as a cancellative and commutative semigroup with an identity element. For convenience, we use multiplicative notation for monoids unless specified otherwise. Let $M$ denote a monoid with identity $1$. A subsemigroup of $M$ that includes $1$ is called a \emph{submonoid}. We denote the group of units of $M$ by $M^{\times}$ and use $M_{\text{red}}$ to refer to the quotient $M/M^{\times}$, which is also a monoid. We say that $M$ is \emph{reduced} if $M^{\times}$ is the trivial group, in which case $M_{\text{red}}$ is naturally identified with $M$.

The \emph{Grothendieck group} of $M$ is an abelian group $\mathcal{G}(M)$ equipped with a monoid homomorphism $\iota \colon M \to \mathcal{G}(M)$ satisfying the following universal property: for any monoid homomorphism $f \colon M \to G$, where $G$ is an abelian group, there exists a unique group homomorphism $g \colon \mathcal{G}(M) \to G$ such that $f = g \circ \iota$. The Grothendieck group of a monoid is unique up to isomorphism. For any subset $S$ of $M$, we let $\langle S \rangle$ denote the smallest submonoid of $M$ containing~$S$, and we say that $S$ is a \emph{generating set} of~$M$ if $M = \langle S \rangle$.

For elements $b, c \in M$, we say that $b$ \emph{divides} $c$ \emph{in} $M$, denoted $b \mid_M c$, if there exists $c' \in M$ such that $c = c'b$. Two elements $b$ and $c$ are \emph{associates} in $M$, denoted $b \simeq_M c$, if $b \mid_M c$ and $c \mid_M b$. When the context makes it clear, we omit the subscript indicating the monoid. A submonoid $N$ of $M$ is called \emph{divisor-closed} if, for $b \in M$ and $c \in N$, the fact that $b \mid_M c$ implies that $b \in N$.

An \emph{atom} in a monoid $M$ is an element $a \in M \setminus M^{\times}$ such that for any $b, c \in M$, the equality $a = bc$ implies that either $b \in M^{\times}$ or $c \in M^{\times}$. We let $\mathcal{A}(M)$ represent the set of all atoms in $M$. The monoid $M$ is \emph{atomic} if every element of $M \setminus M^{\times}$ can be expressed as a finite product of atoms. It is easy to verify that $M$ is atomic if and only if $M_{\text{red}}$ is atomic.

Assuming $M$ is atomic, let $\mathsf{Z}(M)$ denote the free commutative monoid generated by the atoms in $\mathcal{A}(M_{\text{red}})$. The elements of $\mathsf{Z}(M)$ are called factorizations, and the length of a factorization $z = a_1 \cdots a_{\ell}$, where $a_1, \dots, a_{\ell} \in \mathcal{A}(M_{\text{red}})$, is denoted $|z|$. There is a unique monoid homomorphism $\pi \colon \mathsf{Z}(M) \to M_{\text{red}}$ that maps each atom to itself. For any $b \in M$, we define two sets essential to the study of factorization theory:
\[
\mathsf{Z}_M(b) = \pi^{-1}(b\mathcal{U}(M)) \subseteq \mathsf{Z}(M) \quad \text{and} \quad \mathsf{L}_M(b) = \{|z| : z \in \mathsf{Z}_M(b)\} \subseteq \nn_0.
\]
As is standard, we drop the subscript $M$ when no ambiguity arises. We use the following terminology: $M$ is a \emph{half-factorial monoid} if $|\mathsf{L}(b)| = 1$ for all $b \in M$, and a \emph{unique factorization monoid} if $|\mathsf{Z}(b)| = 1$ for all $b \in M$. A unique factorization monoid is also referred to as \emph{factorial}. It follows directly from the definitions that every unique factorization monoid is half-factorial.

\subsection{Semidomains}

A \emph{commutative semiring} $\mathfrak{S}$ is a non-empty set equipped with two binary operations: addition and multiplication, denoted by ``$+$'' and ``$\cdot$'', respectively. Specifically, a commutative semiring satisfies the following properties:

\begin{enumerate}
	\item $(\mathfrak{S}, +)$ is a monoid with an identity element $0$;
	\item $(\mathfrak{S}, \cdot)$ is a commutative semigroup with an identity element $1$, where $1 \neq 0$; and
	\item  $s_1 \cdot (s_2 + s_3) = s_1 \cdot s_2 + s_1 \cdot s_3$ for all $s_1, s_2, s_3 \in \mathfrak{S}$.
\end{enumerate}

For a commutative semiring $\mathfrak{S}$, the distributive law and the cancellative property of addition ensure that $0 \cdot s = 0$ for all $s \in \mathfrak{S}$. Throughout this paper, we denote products of elements in $\mathfrak{S}$ by $ss'$ rather than $s \cdot s'$. It is worth noting that in the general definition of a semiring, commutativity in multiplication and the cancellative property of addition are not required. However, we are concerned with semirings that are both commutative and additive-cancellative. For simplicity, we will refer to these simply as \emph{semirings}, assuming commutativity and cancellativity for both operations. A \emph{semiring homomorphism} is a map between two semirings that preserves both the additive and multiplicative structures. Formally, let $(R, +, \cdot, 0, 1)$ and $(S, \oplus, \odot, 0_S, 1_S)$ be semirings. A function $\varphi : R \to S$ is a semiring homomorphism if it satisfies the following conditions for all $a, b \in R$:
\begin{enumerate}
	\item $\varphi(a + b) = \varphi(a) \oplus \varphi(b)$;
	\item $\varphi(a \cdot b) = \varphi(a) \odot \varphi(b)$;
	\item $\varphi(0) = 0_S$;
	\item $\varphi(1) = 1_S$.
\end{enumerate}

An \emph{ideal} $I$ of a semidomain $\mathfrak{S}$ is a nonempty subset of $\mathfrak{S}$ satisfying the following conditions:
\begin{enumerate}
	\item if $i,i' \in I$ then $i + i' \in I$;
	\item if $s \in \mathfrak{S}$ and $i \in I$ then $si \in I$.
\end{enumerate}
We say that $I$ is a \emph{proper} ideal if $I \subsetneq \mathfrak{S}$. On the other hand, an ideal $I$ is called \emph{subtractive} if, for $s \in \mathfrak{S}$ and $i \in I$, the condition $s + i \in I$ implies that $s \in I$. Observe that all ideals of a commutative ring are subtractive. However, the ideals of a semidomain need not be subtractive. Indeed, the ideal $I = \nn_0 \setminus \{1\}$ of $\nn_0$ is clearly not subtractive. A proper ideal $I$ is called \emph{prime} if whenever $ss' \in I$ for some $s,s' \in \mathfrak{S}$, we have that either $s \in I$ or $s' \in I$. On the other hand, the ideal $I$ is \emph{maximal} if there is no ideal $I' \subsetneq  S$ such that $I \subsetneq I'$. 

A subset $\mathfrak{S}' \subseteq \mathfrak{S}$ is called a \emph{subsemiring} if $(\mathfrak{S}', +)$ is a submonoid of $(\mathfrak{S}, +)$, closed under multiplication, and contains the multiplicative identity $1$. A semiring $\mathfrak{S}$ is called a \emph{semidomain} if it is a subsemiring of an integral domain. For a semidomain $\mathfrak{S}$, the set $\mathfrak{S} \setminus \{0\}$ forms a monoid under multiplication, denoted by $\mathfrak{S}^*$, and referred to as the \emph{multiplicative monoid of $\mathfrak{S}$}. To distinguish between additive and multiplicative units, we refer to the units of $(\mathfrak{S}, +)$ as \emph{invertible elements} and denote them by $\mathcal{U}(\mathfrak{S})$, while we denote the units of $\mathfrak{S}^*$ as $\mathfrak{S}^{\times}$. We use $\mathcal{A}(\mathfrak{S})$ to denote the set of atoms of the multiplicative monoid $\mathfrak{S}^*$, and we write $s \mid_\mathfrak{S} s'$ to indicate that $s$ divides $s'$ in $\mathfrak{S}^*$.

\begin{lemma} \label{lem:characterization of integral semirings}
	For a semiring $\mathfrak{S}$, the following conditions are equivalent.
	\begin{enumerate}
		\item[(a)] $\mathfrak{S}$ is a semidomain.
		\item[(b)] The multiplication of $\mathfrak{S}$ extends to the Grothendieck group $\mathcal{G}(\mathfrak{S})$ of $(\mathfrak{S}, +)$, making $\mathcal{G}(\mathfrak{S})$ an integral domain.
	\end{enumerate}
\end{lemma}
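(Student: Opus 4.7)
The plan is to prove both implications by realizing $\mathcal{G}(\mathfrak{S})$ as an explicit subring of an integral domain containing $\mathfrak{S}$.

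For the direction (b) $\Rightarrow$ (a), the natural monoid homomorphism $\iota \colon \mathfrak{S} \to \mathcal{G}(\mathfrak{S})$ is injective because $(\mathfrak{S},+)$ is cancellative, which is built into our standing convention for semirings. Under the hypothesis of (b), the extended multiplication on $\mathcal{G}(\mathfrak{S})$ promotes $\iota$ to an injective semiring homomorphism, exhibiting $\mathfrak{S}$ as a subsemiring of the integral domain $\mathcal{G}(\mathfrak{S})$, which is precisely the definition of a semidomain.

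For the direction (a) $\Rightarrow$ (b), I fix an integral domain $R$ with $\mathfrak{S} \subseteq R$ and set
\[
T := \{s - s' \mid s, s' \in \mathfrak{S}\} \subseteq R.
\]
The first step is to verify that $T$ is a subring of $R$: it contains $1 = 1 - 0$, it is closed under negation and addition by construction, and the identity
\[
(s - s')(t - t') = (st + s't') - (st' + s't)
\]
together with the closure of $\mathfrak{S}$ under multiplication shows that $T$ is closed under multiplication. As a subring of $R$, $T$ inherits the structure of an integral domain. Next, I apply the universal property of the Grothendieck group to the inclusion $\mathfrak{S} \hookrightarrow T$ to produce an additive homomorphism $\varphi \colon \mathcal{G}(\mathfrak{S}) \to T$; this map is surjective by the very definition of $T$ and injective because $s - s' = 0$ in $R$ forces $s = s'$ in $\mathfrak{S}$. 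Transporting the ring structure of $T$ back through $\varphi$ yields an integral domain structure on $\mathcal{G}(\mathfrak{S})$ whose multiplication extends that of $\mathfrak{S}$, as required.

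The argument is essentially bookkeeping; the main obstacle is verifying that $T$ is closed under multiplication, which is where the distributive law and the closure of $\mathfrak{S}$ under multiplication are simultaneously used. Conceptually, the key observation is that for a cancellative additive monoid every element of the Grothendieck group is a single difference, so the abstract group $\mathcal{G}(\mathfrak{S})$ can be concretely identified with the subring $T \subseteq R$, and this identification transfers the integral-domain structure from $T$ onto $\mathcal{G}(\mathfrak{S})$ canonically.
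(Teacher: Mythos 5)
Your argument is correct and follows essentially the same route as the paper: the paper likewise identifies $\mathcal{G}(\mathfrak{S})$ with the set of differences $\{r - s \mid r, s \in \mathfrak{S}\}$ inside an ambient integral domain $R$ and observes it is closed under the inherited multiplication. You merely spell out in more detail the two points the paper leaves implicit (the multiplicative closure identity for $T$ and the injectivity of the induced map $\mathcal{G}(\mathfrak{S}) \to T$), as well as the (b) $\Rightarrow$ (a) direction that the paper leaves to the reader.
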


\begin{proof}
	Proving that statement (b) implies statement (a) is straightforward. So, we leave this task to the reader. Now suppose $\mathfrak{S}$ is a semidomain embedded in an integral domain $R$. We can identify the Grothendieck group $\mathcal{G}(\mathfrak{S})$ of $(\mathfrak{S}, +)$ with the subset $\{r - s \mid r, s \in \mathfrak{S}\}$ of the underlying additive group of $R$. This group is closed under multiplication inherited from $R$ and includes the multiplicative identity since $0, 1 \in \mathfrak{S}$. Thus, $\mathcal{G}(\mathfrak{S})$ is an integral domain containing $S$ as a subsemiring.
\end{proof}

A semidomain $\mathfrak{S}$ is \emph{atomic} if $\mathfrak{S}^*$ is an atomic monoid. If $\mathfrak{S}^*$ is a half-factorial monoid, then $\mathfrak{S}$ is called a \emph{half-factorial semidomain} (HFS). Likewise, if $\mathfrak{S}^*$ is a unique factorization monoid, $\mathfrak{S}$ is called a \emph{unique factorization semidomain} (UFS). When $\mathfrak{S}$ is an integral domain, this recovers the usual concept of a UFD. Note, however, that while a semidomain $\mathfrak{S}$ can embed into an integral domain $R$, it may not inherit the atomic properties of $R$. For instance, the integral domain $\mathbb{Q}[x]$ is a UFD, but the subring $\mathbb{Z} + x\mathbb{Q}[x]$ is not atomic.

\smallskip
\section{Localization of Semidomains}
\smallskip

Localization is a fundamental construction in commutative algebra, traditionally developed in the context of commutative rings (or modules). In this section, we explore its adaptation to semidomains, showing that many familiar properties and techniques carry over naturally. For a more general treatment of localization in similar settings, see \cite[Chapter 11]{JG1999}.

Let $\mathfrak{S}$ be a semidomain, and let $S$ be a \emph{multiplicative subset} of $\mathfrak{S}$ (i.e., a submonoid of $\mathfrak{S}^*$). Since $S$ is also a multiplicative subset of the integral domain $\mathcal{G}(\mathfrak{S})$, we can consider the localization of $\mathcal{G}(\mathfrak{S})$ at $S$, which we denote by $S^{-1}\mathcal{G}(\mathfrak{S})$. Now set $R \coloneqq (\mathfrak{S} \times S)/\sim$, where $\sim$ is an equivalence relation on $\mathfrak{S} \times S$ defined by $(s,d) \sim (s',d')$ if and only if $sd' = ds'$. We let $\frac{s}{d}$ denote the equivalence class of $(s,d)$. Define the following operations in $R$:
\[
\frac{s}{d} \cdot \frac{s'}{d'} = \frac{ss'}{dd'} \hspace{.3 cm} \text{ and } \hspace{.3 cm} \frac{s}{d} + \frac{s'}{d'} = \frac{sd' + ds'}{dd'}.
\]
It is routine to verify that these operations are well defined and that $(R,+,\cdot)$ is a semiring. So, we leave the details to the reader. The semiring $R$ is called \emph{the localization of $\mathfrak{S}$ at $S$} and is denoted by $S^{-1}\mathfrak{S}$. Let $\varphi\colon R \to S^{-1}\mathcal{G}(\mathfrak{S})$ be a map given by $\varphi(\frac sd) = \overline{\frac sd}$, where $\overline{\frac sd}$ represents the equivalence class of $(s,d)$ as an element of $S^{-1}\mathcal{G}(\mathfrak{S})$. Note that $\varphi$ is a well-defined semiring homomorphism. Since $\varphi$ is injective, we can conclude that $R$ is, in fact, a semidomain. 

As with integral domains, the localization of a semidomain can be characterized up to isomorphism as demonstrated by the following result.

\begin{prop} \label{prop: map from a semidomain to its localization}
	Let $\mathfrak{S}$ be a semidomain, and let $S$ be a multiplicative subset of $\mathfrak{S}$. The following statements hold.
	\begin{enumerate}
		\item The map $\pi\colon \mathfrak{S} \to S^{-1}\mathfrak{S}$ given by $\pi(t) = \frac{t}{1}$ is an injective semiring homomorphism satisfying that $\pi(s)$ is a unit in $S^{-1}\mathfrak{S}$ for all $s \in S$.
		\item If $\theta\colon \mathfrak{S} \to \mathfrak{S}'$ is a semiring homomorphism such that $\theta(s)$ is a unit in $\mathfrak{S}'$ for every $s \in S$ then there exists a unique semiring homomorphism $\phi\colon S^{-1}\mathfrak{S} \to \mathfrak{S}'$ such that $\phi \circ \pi = \theta$.
	\end{enumerate}
\end{prop}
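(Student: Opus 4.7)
The plan is to mimic the classical universal-property argument for localizations of commutative rings, checking at each step that nothing requires additive inverses (since we work inside the semidomain $\mathfrak{S}$, not $\mathcal{G}(\mathfrak{S})$).

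For part (1), I would verify the three algebraic conditions directly from the definitions of addition and multiplication in $S^{-1}\mathfrak{S}$. Concretely, $\pi(t+t') = (t+t')/1 = (t\cdot 1 + 1\cdot t')/(1\cdot 1) = t/1 + t'/1$, and analogously for multiplication; the identities $\pi(0) = 0/1$ and $\pi(1) = 1/1$ are tautological. For injectivity, $\pi(t) = \pi(t')$ means $(t,1) \sim (t',1)$, and unravelling the equivalence relation gives $t\cdot 1 = 1\cdot t'$, hence $t = t'$. Finally, if $s \in S$, then $(s/1)(1/s) = s/s$ and one checks $(s,s) \sim (1,1)$ (both sides of the defining equation equal $s$), so $\pi(s)$ is a unit in $S^{-1}\mathfrak{S}$ with inverse $1/s$.

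For part (2), I would define $\phi\colon S^{-1}\mathfrak{S} \to \mathfrak{S}'$ by $\phi(s/d) = \theta(s)\,\theta(d)^{-1}$, where $\theta(d)^{-1}$ exists in $\mathfrak{S}'$ by hypothesis. Well-definedness is the crucial point: if $s/d = s'/d'$, then $sd' = ds'$ in $\mathfrak{S}$, so applying $\theta$ gives $\theta(s)\theta(d') = \theta(d)\theta(s')$, and multiplying both sides by $\theta(d)^{-1}\theta(d')^{-1}$ yields $\theta(s)\theta(d)^{-1} = \theta(s')\theta(d')^{-1}$. I would then check that $\phi$ preserves addition, multiplication, $0$, and $1$ by straightforward computations from the formulas for sum and product in $S^{-1}\mathfrak{S}$ (using that $\theta$ is a semiring homomorphism and that the units $\theta(d)^{-1}$ commute with everything in $\mathfrak{S}'$). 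The factorization identity $\phi \circ \pi = \theta$ is immediate from $\phi(t/1) = \theta(t)\theta(1)^{-1} = \theta(t)$. For uniqueness, any semiring homomorphism $\phi'$ satisfying $\phi' \circ \pi = \theta$ must send $s/1$ to $\theta(s)$ and $d/1$ to $\theta(d)$; since $\phi'(1/d)$ must be the multiplicative inverse of $\phi'(d/1) = \theta(d)$, one is forced to have $\phi'(1/d) = \theta(d)^{-1}$, and the multiplicativity of $\phi'$ then pins down $\phi'(s/d) = \phi(s/d)$.

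The proof presents no substantive obstacle: the only place one might worry about the absence of additive inverses is in the well-definedness of $\phi$, but the equation $sd' = ds'$ used there lives entirely inside $\mathfrak{S}$ and needs no subtraction. The additive compatibility of $\phi$ similarly rests on the distributive law and the identity $(sd' + ds')/(dd') = s/d + s'/d'$, both of which are native to the semidomain setting. Thus the classical argument transfers verbatim, with the distinction between additive and multiplicative units being purely notational.
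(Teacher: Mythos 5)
Your proof is correct and follows essentially the same route as the paper: define $\phi(t/s) = \theta(t)\theta(s)^{-1}$, verify $\phi \circ \pi = \theta$, and derive uniqueness from $\phi'(t/s) = \phi'(t/1)\phi'(1/s)$. You are in fact somewhat more careful than the paper, which leaves part (1) and the well-definedness of $\phi$ to the reader, whereas you spell out the check that $sd' = ds'$ forces $\theta(s)\theta(d)^{-1} = \theta(s')\theta(d')^{-1}$.
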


\begin{proof}
	Proving the statement $(1)$ is straightforward, so we leave the details to the reader. Now consider the semiring homomorphism $\phi\colon S^{-1}\mathfrak{S} \to \mathfrak{S}'$ such that $\phi(\frac{t}{s}) = \theta(t)\theta(s)^{-1}$. Note that, for every $t \in \mathfrak{S}$, we have
	\[
	(\phi \circ \pi)(t) = \phi(\pi(t)) = \phi\left(\frac{t}{1}\right) = \theta(t).
	\]
	Suppose that there exists $\phi'\colon S^{-1}\mathfrak{S} \to \mathfrak{S}'$ such that $\phi' \circ \pi = \theta$. Thus, for every $\frac{t}{s} \in S^{-1}\mathfrak{S}$, we have
	\[
	\phi'\left(\frac{t}{s}\right) = \phi'\left(\frac{t}{1}\right)\phi'\left(\frac{1}{s}\right) = (\phi' \circ \pi)(t)(\phi' \circ \pi)(s)^{-1} = \theta(t)\theta(s)^{-1} = \phi\left(\frac{t}{s}\right).
	\]
	Therefore, the map $\phi$ is unique, and $(2)$ follows.
\end{proof}

The localization of a semidomain $\mathfrak{S}$ with respect to two different multiplicative subsets, $S_1$ and $S_2$, can sometimes result in isomorphic semidomains, $S_1^{-1}\mathfrak{S}$ and $S_2^{-1}\mathfrak{S}$. The following corollary provides further insight into this phenomenon, but we first need a couple of definitions. A multiplicative subset $S$ of $\mathfrak{S}$ is \emph{saturated} if for $s \in \mathfrak{S}$ and $s' \in S$, we have that $s \mid_{\mathfrak{S}} s'$ implies that $s \in S$. Given a multiplicative subset $S$, we define its saturation as $\overline{S} \coloneqq \{s \in \mathfrak{S} : s \mid_{\mathfrak{S}} s' \text{ for some } s' \in S\}$. It is not hard to see that $\overline{S}$ is a saturated subset of $\mathfrak{S}$.   

\begin{cor} \label{cor: saturated}
	Let $\mathfrak{S}$ be a semidomain, and let $S$ be a multiplicative subset of $\mathfrak{S}$. Then $S^{-1}\mathfrak{S}$ and $\overline{S}^{-1}\mathfrak{S}$ are isomorphic.
\end{cor}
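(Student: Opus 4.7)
The plan is to apply the universal property established in Proposition~\ref{prop: map from a semidomain to its localization} twice, producing semiring homomorphisms in both directions that are forced to be mutual inverses by uniqueness. Throughout, let $\pi_S\colon \mathfrak{S} \to S^{-1}\mathfrak{S}$ and $\pi_{\overline{S}}\colon \mathfrak{S} \to \overline{S}^{-1}\mathfrak{S}$ denote the canonical inclusions from part~(1) of the proposition.

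The first step is to verify that every element of $\overline{S}$ is sent to a unit by $\pi_S$. Given $s \in \overline{S}$, pick $s' \in S$ with $s \mid_{\mathfrak{S}} s'$ and write $s' = s s''$ for some $s'' \in \mathfrak{S}$. Then a direct computation in $S^{-1}\mathfrak{S}$ gives
\[
\frac{s}{1} \cdot \frac{s''}{s'} = \frac{ss''}{s'} = \frac{s'}{s'} = \frac{1}{1},
\]
so $\pi_S(s) = \frac{s}{1}$ is a unit. Dually, because $S \subseteq \overline{S}$, every element of $S$ is automatically a unit in $\overline{S}^{-1}\mathfrak{S}$ under $\pi_{\overline{S}}$.

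Next I would use part~(2) of Proposition~\ref{prop: map from a semidomain to its localization} twice. Applying it to the homomorphism $\pi_S\colon \mathfrak{S} \to S^{-1}\mathfrak{S}$ (which, by the previous paragraph, inverts every element of $\overline{S}$) yields a unique semiring homomorphism $\phi\colon \overline{S}^{-1}\mathfrak{S} \to S^{-1}\mathfrak{S}$ with $\phi \circ \pi_{\overline{S}} = \pi_S$. Applying it instead to $\pi_{\overline{S}}\colon \mathfrak{S} \to \overline{S}^{-1}\mathfrak{S}$ (which inverts every element of $S$) yields a unique $\psi\colon S^{-1}\mathfrak{S} \to \overline{S}^{-1}\mathfrak{S}$ with $\psi \circ \pi_S = \pi_{\overline{S}}$.

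To conclude, observe that the composite $\phi \circ \psi\colon S^{-1}\mathfrak{S} \to S^{-1}\mathfrak{S}$ satisfies $(\phi \circ \psi) \circ \pi_S = \phi \circ \pi_{\overline{S}} = \pi_S$, so by the uniqueness clause in part~(2) of Proposition~\ref{prop: map from a semidomain to its localization}, applied with $\mathfrak{S}' = S^{-1}\mathfrak{S}$ and $\theta = \pi_S$, we must have $\phi \circ \psi = \mathrm{id}_{S^{-1}\mathfrak{S}}$. A symmetric argument on $\overline{S}^{-1}\mathfrak{S}$ gives $\psi \circ \phi = \mathrm{id}_{\overline{S}^{-1}\mathfrak{S}}$, and hence $\phi$ is the desired isomorphism. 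The only real content beyond formal manipulation of the universal property is the initial verification that elements of $\overline{S}$ become units in $S^{-1}\mathfrak{S}$; everything else follows by the abstract nonsense encoded in Proposition~\ref{prop: map from a semidomain to its localization}.
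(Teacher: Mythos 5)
Your proof is correct and follows essentially the same route as the paper: check that elements of $\overline{S}$ become units in $S^{-1}\mathfrak{S}$, invoke the universal property of Proposition~\ref{prop: map from a semidomain to its localization} in both directions, and use the uniqueness clause to see the two maps are mutually inverse. In fact you spell out the final uniqueness argument (that $\phi\circ\psi$ and $\psi\circ\phi$ are identities) more explicitly than the paper does.
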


\begin{proof}
	Let $\pi_1\colon \mathfrak{S} \to S^{-1}\mathfrak{S}$ be a map given by $\pi_1(s) = \frac{s}{1}$. Similarly, let $\pi_2\colon \mathfrak{S} \to \overline{S}^{-1}\mathfrak{S}$ be a map given by $\pi_2(s) = \frac{s}{1}$. Since $S \subseteq \overline{S}$, there exists a unique semiring homomorphism $\phi_1 \colon S^{-1}\mathfrak{S} \to \overline{S}^{-1}\mathfrak{S}$ such that $\phi_1 \circ \pi_1 = \pi_2$. Moreover, observe that if $s \mid_{\mathfrak{S}} s'$ for some $s' \in S$, then $\pi_1(s)$ is a unit of $S^{-1}\mathfrak{S}$. Consequently, there exists a unique semiring homomorphism $\phi_2\colon \overline{S}^{-1}\mathfrak{S} \to S^{-1}\mathfrak{S}$ such that $\phi_2 \circ \pi_2 = \pi_1$. Therefore, the map $\phi_1$ is a semiring homomorphism.
\end{proof}

Given an ideal $I$ of a semidomain $\mathfrak{S}$, we denote by $S^{-1}I$ the ideal of the localization $S^{-1}\mathfrak{S}$ generated by the set $\pi(I)$, where the map $\pi$ is defined as in Proposition~\ref{prop: map from a semidomain to its localization}. We say that the ideal $S^{-1}I$ is the \emph{extension of $I$ by $\pi$}.

\begin{prop} \label{prop: bijection between ideals of a semidomain and its localization}
	Let $\mathfrak{S}$ be a semidomain, and let $S$ be a multiplicative subset of $\mathfrak{S}$. The following statements hold.
	\begin{enumerate}
		\item For any ideal $J$ of $S^{-1}\mathfrak{S}$ we have that $S^{-1}\pi^{-1}(J) = J$. In other words, every ideal of $S^{-1}\mathfrak{S}$ is the extension of an ideal in $\mathfrak{S}$.
		\item For an ideal $I$ of $\mathfrak{S}$, we have that $S^{-1}I = S^{-1}\mathfrak{S}$ if and only if $I \cap S \neq \emptyset$.
		\item The assignment $I \mapsto S^{-1}I$ induces a bijection between the set of prime ideals of $\mathfrak{S}$ disjoint from $S$ and the set of prime ideals of $S^{-1}\mathfrak{S}$.
	\end{enumerate}
\end{prop}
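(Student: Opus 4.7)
My plan is to follow the classical strategy from commutative algebra, adapted to the semidomain setting, where additive inverses are unavailable. A preliminary observation that streamlines everything is the concrete description
\[
S^{-1}I = \left\{\tfrac{b}{s} : b \in I,\ s \in S\right\}.
\]
This holds because a general element of the ideal generated by $\pi(I)$ has the form $\sum_{i=1}^n \frac{r_i}{s_i}\cdot \frac{a_i}{1}$ with $r_i \in \mathfrak{S}$, $s_i \in S$, and $a_i \in I$; putting everything over the common denominator $\prod_{j=1}^n s_j$ yields a numerator that is a sum of elements of $I$, hence itself in $I$ since $I$ is closed under addition and left multiplication by $\mathfrak{S}$.

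For part (1), both containments are short. If $b \in \pi^{-1}(J)$, then $\frac{b}{1} \in J$, so $\frac{b}{s} = \frac{1}{s}\cdot\frac{b}{1} \in J$. Conversely, if $\frac{t}{s} \in J$, then $\frac{t}{1} = \frac{s}{1}\cdot \frac{t}{s} \in J$, giving $t \in \pi^{-1}(J)$ and hence $\frac{t}{s} \in S^{-1}\pi^{-1}(J)$.

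For part (2), the key is that $\frac{s}{1}$ is a unit in $S^{-1}\mathfrak{S}$ for every $s \in S$ by Proposition~\ref{prop: map from a semidomain to its localization}(1). If $s \in I \cap S$, then $\frac{s}{1} \in S^{-1}I$, and $\frac{1}{1} = \frac{s}{1}\cdot\frac{1}{s} \in S^{-1}I$, forcing $S^{-1}I = S^{-1}\mathfrak{S}$. Conversely, if $\frac{1}{1} \in S^{-1}I$, then $\frac{1}{1} = \frac{b}{s}$ with $b \in I$ and $s \in S$; the defining equivalence relation yields $s = b$, so $s \in I \cap S$.

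For part (3), I would exhibit $I \mapsto S^{-1}I$ and $J \mapsto \pi^{-1}(J)$ as mutually inverse bijections between the two indicated sets. The identity $S^{-1}\pi^{-1}(J) = J$ is part (1). The identity $\pi^{-1}(S^{-1}I) = I$ for prime $I$ disjoint from $S$ uses primality essentially: if $\frac{t}{1} = \frac{b}{s}$ with $b \in I$, then $ts = b \in I$, and since $s \notin I$, primality forces $t \in I$. The remaining verifications (that $\pi^{-1}(J)$ is a prime ideal of $\mathfrak{S}$ disjoint from $S$, and that $S^{-1}I$ is a prime ideal of $S^{-1}\mathfrak{S}$) transfer routinely along $\pi$: disjointness from $S$ follows from $J$ containing no units, primality of $\pi^{-1}(J)$ is immediate from $\pi$ being a homomorphism, and properness of $S^{-1}I$ is exactly part (2). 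I expect no serious obstacle; the only subtlety is to respect the lack of additive inverses, which the explicit description of $S^{-1}I$ sidesteps since we never need to recover an element by subtraction.
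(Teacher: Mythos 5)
Your proof is correct and follows essentially the same route as the paper's: the explicit description $S^{-1}I = \{\tfrac{b}{s} : b \in I,\ s \in S\}$, the unit/equivalence-relation argument for (2), and the transfer of primality along $\pi$ for (3). If anything you are slightly more complete than the paper, which concludes (3) by saying it ``follows from statement (1)'' without explicitly verifying the identity $\pi^{-1}(S^{-1}I) = I$ for prime $I$ disjoint from $S$, a step you supply correctly using primality of $I$.
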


\begin{proof}
	Set $J' \coloneqq S^{-1}\pi^{-1}(J)$, and observe that evidently $J' \subseteq J$. Let $j \in J$, and write it as $j = \frac{t}{s}$ with $t \in \mathfrak{S}$ and $s \in S$. Since $\frac{s}{1}\cdot \frac{t}{s} \in J$, we have that $t \in \pi^{-1}(J)$ which, in turn, implies that $j \in J'$. Hence the statement $(1)$ follows.
	
	Suppose now that $S^{-1}I = S^{-1}\mathfrak{S}$ for some ideal $I$ of $\mathfrak{S}$. Then there exists $i \in I$ and $s \in S$ such that $\frac{i}{s} = \frac{1}{1}$. This implies that $i \in I \cap S$. Conversely, let $i \in I \cap S$. In this case, we have that $\frac{i}{i} \in S^{-1}I$. Consequently, $S^{-1}I = S^{-1}\mathfrak{S}$.
	
	Assume that $I$ is a prime ideal of $\mathfrak{S}$ disjoint from $S$. We claim that $S^{-1}I$ is a prime ideal of $S^{-1}\mathfrak{S}$. Note that $S^{-1}I$ is a proper ideal of $S^{-1}\mathfrak{S}$ by $(2)$. Now let $\frac{s}{t}, \frac{s'}{t'} \in S^{-1}\mathfrak{S}$ such that $\frac{ss'}{tt'} \in S^{-1}I$. This implies that $\frac{ss'}{tt'} = \frac{i}{t''}$ for some $i \in I$ and $t'' \in S$. Since $I$ is disjoint from $S$, we have that either $s \in I$ or $s' \in I$, which concludes the proof of our claim. On the other hand, it is not hard to see that if $J$ is a prime ideal of $S^{-1}\mathfrak{S}$ then $\pi^{-1}(J)$ is a prime ideal of $\mathfrak{S}$, which is disjoint from $S$ by condition $(2)$. Then statement $(3)$ follows from statement $(1)$. 
\end{proof}

\smallskip
\section{Localizations of Unique Factorization Semidomains}
\smallskip

In the context of integral domains, it is well known that the localization of a UFD is a UFD. It is also known that if an integral domain $\mathfrak{S}$ is atomic and a multiplicative closed subset $S$ is generated by primes, then the localization $S^{-1}\mathfrak{S}$ being a UFD implies that $S$ is also a UFD. In this section, we extend these results to the more general class of semidomains. To do so, we first provide a characterization of UFSs in terms of their prime ideals. Specifically, we extend Kaplansky's characterization of UFDs \cite[Theorem~5]{iK74} to the context of semidomains.

\begin{lemma}\label{prop: existence of prime ideals disjoint from a multiplicatively closed subset}
	Let $\mathfrak{S}$ be a semidomain, and let $S$ be a multiplicative subset of $\mathfrak{S}$. If there exists an ideal $I$ satisfying that $S \cap I = \emptyset$ then there exists a prime ideal $P$ such that $S \cap P = \emptyset$.
\end{lemma}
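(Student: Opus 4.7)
The plan is to run the standard Zorn's-lemma argument from commutative ring theory that produces a prime ideal avoiding a multiplicative set, checking at each step that the reasoning survives the lack of additive inverses in a semidomain.

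First, consider the family $\mathcal{F}$ of all ideals $J$ of $\mathfrak{S}$ with $J \cap S = \emptyset$, partially ordered by inclusion. The hypothesis makes $\mathcal{F}$ nonempty, since $I \in \mathcal{F}$. Given a chain $\{J_\lambda\}_{\lambda \in \Lambda}$ in $\mathcal{F}$, the union $\bigcup_\lambda J_\lambda$ is again an ideal (closure under addition and absorption follow at once from the chain property) and is clearly disjoint from $S$. Thus $\mathcal{F}$ satisfies the hypothesis of Zorn's lemma, yielding a maximal element $P \in \mathcal{F}$.

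Next, suppose toward a contradiction that $P$ is not prime, so there exist $a, b \in \mathfrak{S} \setminus P$ with $ab \in P$. For any $c \in \mathfrak{S}$, the set $P + \mathfrak{S}c \coloneqq \{p + rc : p \in P,\ r \in \mathfrak{S}\}$ is routinely checked to be an ideal of $\mathfrak{S}$ containing $P$, and it also contains $c$ (take $p=0$, $r=1$). Applied to $c=a$ and $c=b$, the maximality of $P$ inside $\mathcal{F}$ forces both $P + \mathfrak{S}a$ and $P + \mathfrak{S}b$ to meet $S$, so we can write $s_1 = p_1 + r_1 a$ and $s_2 = p_2 + r_2 b$ with $s_i \in S$, $p_i \in P$, and $r_i \in \mathfrak{S}$.

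Expanding
\[
s_1 s_2 = p_1 p_2 + p_1 r_2 b + r_1 p_2 a + r_1 r_2 (ab)
\]
places every summand in $P$: the first three because one of the factors already lies in $P$, and the last because $ab \in P$. Thus $s_1 s_2 \in P$, while $s_1 s_2 \in S$ by multiplicative closure, contradicting $P \in \mathcal{F}$. The only places where the semidomain setting could plausibly cause trouble are in checking that $P + \mathfrak{S}c$ is an ideal and that unions of chains of ideals are ideals; both of these rely solely on additive closure and multiplicative absorption—exactly the axioms imposed on ideals in the excerpt—so no use is made of additive inverses and the argument goes through verbatim.
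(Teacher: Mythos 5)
Your proof is correct and follows essentially the same route as the paper: apply Zorn's lemma to the family of ideals disjoint from $S$, then show a maximal such ideal is prime by expanding $s_1 s_2$ with $s_1 \in S \cap (P + \mathfrak{S}a)$ and $s_2 \in S \cap (P + \mathfrak{S}b)$. The only cosmetic difference is that the paper restricts to ideals containing $I$, which changes nothing.
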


\begin{proof}
	Let $\mathcal{I}$ be the set consisting of all ideals of $\mathfrak{S}$ that contain $I$ and are disjoint from $S$. Observe that $\mathcal{I}$ is nonempty as $I \in \mathcal{I}$. Take an arbitrary chain $\{I_{\alpha} \mid \alpha \in \Delta\}$ in $\mathcal{I}$, and consider the ideal $J= \cup_{\alpha \in \Delta} I_\alpha$. Clearly, the ideal $J$ contains $I$ and the equality $S \cap J = \emptyset$ holds. In other words, every chain in $\mathcal{I}$ has an upper bound. By Zorn's lemma, the set $\mathcal{I}$ contains a maximal element $P$. We now prove that $P$ is prime. Let $s, s' \in \mathfrak{S}$ such that $ss' \in P$. Suppose towards a contradiction that $s \notin P$ and $s' \notin P$. Since both $P + (s)$ and $P + (s')$ properly contain $P$, we obtain that $P + (s)$ and $P + (s')$ are not elements of $\mathcal{I}$. Consequently, there exist $s_1 \in S \cap (P + (s))$ and $s_2 \in S \cap (P + (s'))$. Now write $s_1 = p_1 + s_3s$ and $s_2 = p_2 + s_4s'$ for some $p_1, p_2 \in P$ and $s_3, s_4 \in \mathfrak{S}$. Hence $s_1s_2 \in S \cap P$, which contradicts that $P \in \mathcal{I}$. Therefore, the ideal $P$ is prime.
\end{proof}

Now we are in a position to extend Kaplansky's characterization of UFDs \cite[Theorem~5]{iK74} to the context of semidomains.

\begin{prop}\label{Kaplansky for semidomains}
	A semidomain $\mathfrak{S}$ is a UFS if and only if every nonzero prime ideal of $\mathfrak{S}$ contains a prime element.  
\end{prop}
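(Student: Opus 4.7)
The argument follows Kaplansky's original proof for UFDs, replacing principal-ideal ring theory with the tools developed above. For the forward direction, assume $\mathfrak{S}$ is a UFS and let $P$ be a nonzero prime ideal. Pick $0 \neq s \in P$; since $P$ is proper, $s$ cannot be a unit, so the UFS hypothesis gives $s = p_1 \cdots p_n$ with each $p_i$ prime. Iterating the defining property of the prime ideal $P$ forces some $p_i \in P$, which is a prime element of $P$.

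For the backward direction, set
\[
S \coloneqq \{u p_1 \cdots p_n : n \geq 0,\ u \in \mathfrak{S}^{\times},\ p_1, \ldots, p_n \text{ prime in } \mathfrak{S}\},
\]
with the convention that $n = 0$ yields just the units. Then $S$ is a multiplicative submonoid of $\mathfrak{S}^{*}$. The key claim is that $S$ is \emph{saturated}: if $a \in \mathfrak{S}^{*}$ divides some $s = ab \in S$, then $a \in S$. I verify this by induction on $n$. When $n = 0$, $a$ divides a unit and so is a unit. When $n \geq 1$, primeness of $p_n$ yields $p_n \mid a$ or $p_n \mid b$; cancelling $p_n$ in the cancellative monoid $\mathfrak{S}^{*}$ reduces the right-hand side to an element of $S$ of length $n-1$, and the induction hypothesis then places $a$ in $S$ in either case.

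Now suppose, for contradiction, that $\mathfrak{S}$ is not a UFS, so there is a nonzero nonunit $a \in \mathfrak{S}^{*} \setminus S$. Since $S$ is saturated, any $ra \in S$ would force $a \in \overline{S} = S$, so the principal ideal $(a) = \mathfrak{S} a$ is disjoint from $S$. Lemma~\ref{prop: existence of prime ideals disjoint from a multiplicatively closed subset} then yields a prime ideal $P \supseteq (a)$ with $P \cap S = \emptyset$; moreover $P$ is nonzero because $0 \neq a \in P$. By hypothesis, $P$ contains a prime element $p$, but every prime element lies in $S$, contradicting $P \cap S = \emptyset$. Therefore $S = \mathfrak{S}^{*}$, which says precisely that every nonzero element of $\mathfrak{S}$ is a unit times a product of primes, giving the UFS property.

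The main obstacle is the saturation claim — it is the only step requiring direct work with divisibility in $\mathfrak{S}^{*}$, and it depends essentially on cancellativity of the multiplicative monoid. Once $S$ is known to be saturated, Lemma~\ref{prop: existence of prime ideals disjoint from a multiplicatively closed subset} transports Kaplansky's argument verbatim.
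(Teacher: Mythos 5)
Your proof is correct and takes essentially the same route as the paper's: the forward direction is identical, and in the backward direction your set $S$ of products of units and primes is exactly the paper's divisor-closed submonoid, with the same appeal to Lemma~\ref{prop: existence of prime ideals disjoint from a multiplicatively closed subset} producing a prime ideal disjoint from $S$ and hence a contradiction. The only difference is that you spell out the saturation claim by induction on the number of prime factors, a step the paper leaves to the reader.
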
  

\begin{proof}
	Suppose that $\mathfrak{S}$ is a UFS. Without loss of generality, assume that $S$ is not a semifield. Let $P$ be a nonzero prime ideal of $\mathfrak{S}$, and take $x \in P\setminus\{0\}$. Since $x$ is not a unit, we can write $x = p_1 \cdots p_n$ for prime elements $p_1, \ldots, p_n \in \mathfrak{S}$. Hence there exists $i \in \llbracket 1,n \rrbracket$ such that $p_i \in P$ because $P$ is a prime ideal.
	
	Now assume that $\mathfrak{S}$ is a semidomain for which every nonzero prime ideal contains a prime element. Suppose towards a contradiction that $S$ is not a UFS. Let $S$ be the set of all elements of $\mathfrak{S}$ that can be expressed as a finite product of units and prime elements. It is not hard to see that $S$ is a divisor-closed submonoid of $\mathfrak{S}^*$. Since $\mathfrak{S}$ is not a UFS, there exists $s \in \mathfrak{S}$ such that $s \notin S$. Hence the principal ideal $(s)$ is disjoint from $S$. By Lemma~\ref{prop: existence of prime ideals disjoint from a multiplicatively closed subset}, we obtain that there exists a prime ideal $P$ such that $(x) \subseteq P$ and $P \cap S = \emptyset$. However, this contradicts the assumption that every nonzero prime ideal contains a prime element, which concludes our argument. 
\end{proof}

We can now prove the main result of this section.

\begin{theorem} \label{UFS}
	Let $\mathfrak{S}$ be a semidomain, and let $S$ be a multiplicative subset of $\mathfrak{S}$. The following statements hold. 
	\begin{enumerate}
		\item If $\mathfrak{S}$ is a UFS then $S^{-1}\mathfrak{S}$ is a UFS.
		\item Suppose that $S$ is generated by primes and $\mathfrak{S}$ is atomic. If $S^{-1}\mathfrak{S}$ is a UFS then $\mathfrak{S}$ is a UFS.
	\end{enumerate}
\end{theorem}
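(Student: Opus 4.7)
The plan is to leverage the characterization of UFSs from Proposition~\ref{Kaplansky for semidomains}, translating the problem into statements about prime elements via the ideal correspondence of Proposition~\ref{prop: bijection between ideals of a semidomain and its localization}.

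For part~(1), take a nonzero prime ideal $Q$ of $S^{-1}\mathfrak{S}$. By Proposition~\ref{prop: bijection between ideals of a semidomain and its localization}(3), one has $Q = S^{-1}P$ for some prime ideal $P$ of $\mathfrak{S}$ disjoint from $S$, and $P$ is nonzero (any $t/s \in Q \setminus \{0\}$ satisfies $t/1 = (s/1)(t/s) \in Q$, hence $t \in \pi^{-1}(Q) = P$ with $t \neq 0$). Since $\mathfrak{S}$ is a UFS, Proposition~\ref{Kaplansky for semidomains} produces a prime element $p \in P$. The remaining task is to verify that $p/1$ is prime in $S^{-1}\mathfrak{S}$: it is not a unit, since otherwise $p$ would divide some element of $S$, placing that element in $P \cap S = \emptyset$; and if $(p/1)$ divides $(a/s')(b/t')$, clearing denominators gives $p \mid ab$ in $\mathfrak{S}$, so primality of $p$ transfers immediately. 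Since $p/1 \in Q$, Proposition~\ref{Kaplansky for semidomains} closes the argument.

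For part~(2), I would show every atom of $\mathfrak{S}$ is prime, which together with atomicity of $\mathfrak{S}$ yields the UFS property. Let $a$ be an atom of $\mathfrak{S}$. If $a$ is associated to one of the prime generators of $S$, then $a$ is immediately prime; otherwise, I first check that $a/1$ is a nonzero non-unit of $S^{-1}\mathfrak{S}$, since if $a/1$ were a unit then $a$ would divide some $s \in S$, and writing $s = p_1 \cdots p_k$ with each $p_i$ a prime generator, peeling off the primes one by one (using that any prime dividing the atom $a$ must be associate to it) would force $a$ to be associate to some $p_i$, contradicting the case assumption. Next, I show $a/1$ is an atom of $S^{-1}\mathfrak{S}$: from a factorization $a/1 = (b/s)(c/t)$ one obtains $a(st) = bc$ in $\mathfrak{S}$, and since $st$ is a product of prime generators of $S$, one can distribute these primes between $b$ and $c$ to obtain a factorization $a = b'c'$ in $\mathfrak{S}$; atomicity of $a$ forces one of $b', c'$ to be a unit of $\mathfrak{S}$, making the corresponding fraction $b/s$ or $c/t$ a unit of $S^{-1}\mathfrak{S}$ (the remaining prime factors lie in $S$ and are therefore invertible in $S^{-1}\mathfrak{S}$). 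Because $S^{-1}\mathfrak{S}$ is a UFS, an atom there is prime, so $a/1$ is prime. To transfer primality back, suppose $a \mid_{\mathfrak{S}} bc$; then $a/1 \mid (b/1)(c/1)$, so without loss of generality $a/1 \mid b/1$, which translates to $bs = ad$ for some $s \in S$ and $d \in \mathfrak{S}$. Writing $s$ as a product of prime generators, none can divide $a$ (that would force the excluded associate relation), so each must be absorbed into $d$ and cancelled, leaving $a \mid b$.

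The main obstacle is the step in part~(2) showing that $a/1$ is still an atom of $S^{-1}\mathfrak{S}$. The key maneuver of converting $a(st) = bc$ into $a = b'c'$ relies essentially on the hypothesis that $S$ is generated by \emph{primes}: this is what allows each factor of $st$ to be routed cleanly to exactly one of $b$ or $c$, and the subsequent unit-transfer from $\mathfrak{S}$ to $S^{-1}\mathfrak{S}$ would break down without it.
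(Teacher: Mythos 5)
Your argument is correct. Part (1) follows essentially the same route as the paper: both pull back a prime ideal of $S^{-1}\mathfrak{S}$ via Proposition~\ref{prop: bijection between ideals of a semidomain and its localization}, extract a prime element $p$ using Proposition~\ref{Kaplansky for semidomains}, and check that its image is prime in the localization. One small gloss: clearing denominators in $(p/1) \mid (a/s')(b/t')$ actually yields $p \mid_{\mathfrak{S}} abu$ for some $u \in S$, not $p \mid_{\mathfrak{S}} ab$ directly; you need the extra observation that $p \nmid_{\mathfrak{S}} u$ (since $p \in P$ and $P \cap S = \emptyset$), which is exactly the role the paper assigns to its reduction to a saturated $S$ via Corollary~\ref{cor: saturated}. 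Part (2) is where you genuinely diverge. The paper stays inside the ideal-theoretic framework: it applies Proposition~\ref{Kaplansky for semidomains} a second time, taking a nonzero prime ideal $P$ of $\mathfrak{S}$ disjoint from $S$, finding a prime element of $S^{-1}P$, and descending it to a prime element of $P$. You instead argue element-wise: atoms of $\mathfrak{S}$ not associate to a generator of $S$ remain atoms in $S^{-1}\mathfrak{S}$ (routing the prime factors of $st$ in $ast = bc$ to $b$ or $c$), atoms of a UFS are prime, and primality descends; combined with atomicity this gives the UFS property. Both arguments use the ``generated by primes'' hypothesis in the same essential way, namely to peel prime factors of elements of $S$ off a divisibility relation in $\mathfrak{S}$. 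Your version has the advantage of avoiding the paper's somewhat terse normalization step (replacing $p$ by a product of atoms outside $S$ while keeping $\frac{p}{s}$ prime and in $S^{-1}P$), at the cost of invoking the standard equivalence between ``factorial'' and ``atomic with all atoms prime,'' which the paper's Kaplansky-style criterion packages for you.
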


\begin{proof}
	Suppose that $\mathfrak{S}$ is a UFS and that $S$ is a multiplicative subset of $\mathfrak{S}$. Let $P$ be an arbitrary prime ideal of $S^{-1}\mathfrak{S}$. By Proposition~\ref{prop: bijection between ideals of a semidomain and its localization}, we know that $\pi^{-1}(P)$ is a prime ideal of $\mathfrak{S}$ that is disjoint from $S$, where $\pi$ is defined as in Proposition~\ref{prop: map from a semidomain to its localization}. Now, by Proposition~\ref{Kaplansky for semidomains}, there exists a prime element $p \in \pi^{-1}(P)$ (thus $p \notin S$). We claim that $\frac{p}{s}$ is a prime element of $S^{-1}\mathfrak{S}$. Suppose towards a contradiction that
	\[
		\frac{p}{s} \;\;\bigg|_{S^{-1}\mathfrak{S}}\; \frac{\mathfrak{s}_1}{s_1}\cdot\frac{\mathfrak{s}_2}{s_2}, \hspace{.3 cm} \frac{p}{s} \;\;\not\bigg|_{S^{-1}\mathfrak{S}}\;\; \frac{\mathfrak{s}_1}{s_1}, \hspace{.3 cm}\text{ and }\hspace{.3 cm} \frac{p}{s} \;\;\not\bigg|_{S^{-1}\mathfrak{S}}\;\; \frac{\mathfrak{s}_2}{s_2}
	\]
	for some $\mathfrak{s}_1, \mathfrak{s}_2 \in \mathfrak{S}$ and $s_1, s_2 \in S$. Then there exist $\mathfrak{s}_3 \in \mathfrak{S}$ and $s_3 \in S$ such that either $p \;|_{\mathfrak{S}}\; ss_3$ or $p \;|_{\mathfrak{S}}\; \mathfrak{s}_1\mathfrak{s}_2$. By virtue of Corollary~\ref{cor: saturated}, there is no loss in assuming that $S$ is saturated. This implies that either $p \;|_{\mathfrak{S}}\; \mathfrak{s}_1$ or $p \;|_{\mathfrak{S}}\; \mathfrak{s}_2$. From this contradiction, our claim readily follows. Therefore, the semidomain $S^{-1}\mathfrak{S}$ is a UFS by Proposition~\ref{Kaplansky for semidomains}, which concludes the proof of statement $(1)$. 
	
	Now suppose that $S$ is generated by primes, the semidomain $\mathfrak{S}$ is atomic, and $S^{-1}\mathfrak{S}$ is a UFS. By Proposition~\ref{Kaplansky for semidomains}, every nonzero prime ideal of $S^{-1}\mathfrak{S}$ contains a prime element. Let $P$ be a nonzero prime ideal of $\mathfrak{S}$ and, without loss of generality, assume that $S \cap P = \emptyset$. By our assumption, the prime ideal $S^{-1}P$ contains a prime element $\frac{p}{s}$. Since $\mathfrak{S}$ is atomic and $p \notin S$, there is no loss in assuming that $p = a_1 \cdots a_n$, where $a_i \in \mathcal{A}(\mathfrak{S}) \setminus S$ for every $i \in \llbracket 1,n \rrbracket$. Suppose towards a contradiction that $p$ is not a prime element of $\mathfrak{S}$. Then there exist $\mathfrak{s}_1, \mathfrak{s}_2 \in \mathfrak{S}$ such that $p \;|_{\mathfrak{S}}\; \mathfrak{s}_1\mathfrak{s}_2$ but $p \not|_{\mathfrak{S}}\, \mathfrak{s}_1$ and $p \not|_{\mathfrak{S}}\, \mathfrak{s}_2$. Since $\frac{p}{s} \,|_{S^{-1}\mathfrak{S}}\, \frac{\mathfrak{s}_1}{s} \cdot \frac{\mathfrak{s}_2}{s}$, we may assume that $\frac{p}{s}\,|_{S^{-1}\mathfrak{S}}\, \frac{\mathfrak{s}_1}{s}$. This implies that $p\mathfrak{s}_3 = s'\mathfrak{s}_1$ for some $\mathfrak{s}_3 \in \mathfrak{S}$ and $s' \in S$. Since $S$ is generated by primes, we have that $s' = p_1 \cdots p_m$ for prime elements $p_1, \ldots, p_m \in \mathfrak{S}$. Since $p_i$ is not an associate of $a_j$ for any $i \in \llbracket 1,m \rrbracket$ and any $j \in \llbracket 1,n \rrbracket$, we have that $p \,|_{\mathfrak{S}}\, \mathfrak{s}_1$, which is a contradiction. By Proposition~\ref{Kaplansky for semidomains}, we have that $\mathfrak{S}$ is a UFS.
\end{proof}

A semidomain $\mathfrak{S}$ is called \emph{semisubtractive} if, for every $s \in \mathcal{G}(\mathfrak{S})$, we have that either $s \in \mathfrak{S}$ or $-s \in \mathfrak{S}$. Fox et al.~\cite{foxgoelliao} showed that semisubtractive semidomains $\mathfrak{S}$ share many algebraic and factorization properties with their domains of differences $\mathcal{G}(\mathfrak{S})$. In particular, they established that if a semidomain $\mathfrak{S}$ that is not an integral domain satisfies the unique factorization property then $\mathfrak{S}$ is additively reduced. We now extend this result to the class of all semidomains.

\begin{lemma}\label{lemma: localizing commutes with taking polynomial extension}
	Let $\mathfrak{S}$ be a semidomain, and let $S$ be a multiplicative subset of $\mathfrak{S}$. Then $S^{-1}(\mathfrak{S}[x]) \cong (S^{-1}\mathfrak{S})[x]$.
\end{lemma}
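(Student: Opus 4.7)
The plan is to invoke the universal property established in Proposition~\ref{prop: map from a semidomain to its localization}. First I would note that $\mathfrak{S}[x]$ is a semidomain because, viewing $\mathfrak{S}$ inside $\mathcal{G}(\mathfrak{S})$, the polynomial semiring $\mathfrak{S}[x]$ embeds into the integral domain $\mathcal{G}(\mathfrak{S})[x]$; therefore $S^{-1}(\mathfrak{S}[x])$ makes sense. Next, the coefficient map $\mathfrak{S} \to S^{-1}\mathfrak{S}$, $a \mapsto a/1$, extends coefficient-wise to a semiring homomorphism $\iota\colon \mathfrak{S}[x] \to (S^{-1}\mathfrak{S})[x]$. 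Each $s \in S$, viewed as a constant polynomial in $\mathfrak{S}[x]$, is sent by $\iota$ to the constant $s/1$, which is a unit in $(S^{-1}\mathfrak{S})[x]$ with inverse $1/s$. Hence Proposition~\ref{prop: map from a semidomain to its localization}(2) yields a unique semiring homomorphism
\[
\phi\colon S^{-1}(\mathfrak{S}[x]) \to (S^{-1}\mathfrak{S})[x]
\]
satisfying $\phi\bigl(f/s\bigr) = \sum_{i=0}^n (a_i/s)\, x^i$ whenever $f = \sum_{i=0}^n a_i x^i \in \mathfrak{S}[x]$ and $s \in S$.

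It remains to check that $\phi$ is a bijection. For surjectivity, given $g(x) = \sum_{i=0}^n (a_i/s_i)\, x^i \in (S^{-1}\mathfrak{S})[x]$, I would set $s \coloneqq s_0 s_1 \cdots s_n \in S$ and $t_i \coloneqq \prod_{j \neq i} s_j$, so that the polynomial $f(x) \coloneqq \sum_{i=0}^n a_i t_i\, x^i$ lies in $\mathfrak{S}[x]$ and $\phi(f/s) = g(x)$. For injectivity, suppose $\phi(f/s) = 0$ with $f = \sum_{i=0}^n a_i x^i$. Then $a_i/s = 0/1$ in $S^{-1}\mathfrak{S}$ for every $i$, and by the very definition of the equivalence relation defining $S^{-1}\mathfrak{S}$ (namely, $(a_i, s) \sim (0,1)$ iff $a_i = 0$) this forces $a_i = 0$ for all $i$. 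Hence $f = 0$ in $\mathfrak{S}[x]$, so $f/s = 0$ in $S^{-1}(\mathfrak{S}[x])$, and $\phi$ is an isomorphism.

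The proof is largely a bookkeeping exercise; there is no substantive obstacle. The main point requiring a moment of care is verifying that the explicit coefficient-wise formula for $\phi$ is indeed produced by the universal property, and that the equivalence relation used in the construction of $S^{-1}\mathfrak{S}$ is simple enough (no auxiliary element is needed, since $\mathfrak{S}$ is additive-cancellative) to make the injectivity step immediate.
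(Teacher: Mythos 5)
Your construction of $\phi$ via the universal property of Proposition~\ref{prop: map from a semidomain to its localization}(2) is a clean variant of the paper's argument (the paper simply writes down the coefficient-wise formula and leaves well-definedness and the homomorphism property to the reader, whereas your route gets both for free), and your surjectivity argument is exactly the paper's. However, your injectivity step has a genuine gap: you verify only that $\phi$ has trivial kernel, i.e., that $\phi(f/s) = 0$ forces $f/s = 0$, and then conclude that $\phi$ is injective. For homomorphisms of \emph{semirings} this implication is false, because one cannot pass from $\phi(u) = \phi(v)$ to $\phi$ of a difference being zero. Even between additively cancellative semidomains the implication fails: the evaluation map $\nn_0[x] \to \nn_0$, $f \mapsto f(1)$, is a semiring homomorphism with trivial kernel ($f(1) = 0$ with nonnegative coefficients forces $f = 0$), yet it identifies $x$ with $1$. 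Since the whole point of this paper is to track exactly where ring-theoretic reflexes break down for semidomains, this is not a cosmetic omission.

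The repair is short and is what the paper actually does: take two arbitrary elements $\frac{f}{s}, \frac{g}{s'}$ with $f = \sum_{i=0}^{n} c_i x^i$ and $g = \sum_{i=0}^{n} d_i x^i$ (padding with zero coefficients to a common degree), observe that $\phi(\frac{f}{s}) = \phi(\frac{g}{s'})$ gives $\frac{c_i}{s} = \frac{d_i}{s'}$ in $S^{-1}\mathfrak{S}$ for every $i$, i.e., $c_i s' = s d_i$ in $\mathfrak{S}$, and hence $f s' = s g$ in $\mathfrak{S}[x]$, which is precisely the statement that $\frac{f}{s} = \frac{g}{s'}$ in $S^{-1}(\mathfrak{S}[x])$. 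Alternatively, you could first extend $\phi$ to the localized rings of differences and apply the kernel criterion there, but that detour is longer than the direct coefficient comparison.
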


\begin{proof}
	Observe that $S$ is also a multiplicative subset of $\mathfrak{S}[x]$, so the construction $S^{-1}(\mathfrak{S}[x])$ makes total sense. Now let us define the map $\phi \colon S^{-1}(\mathfrak{S}[x]) \rightarrow  (S^{-1}\mathfrak{S})[x]$ given by
	\[
		\phi\left(\frac{\sum_{i = 0}^n c_ix^i}{s}\right) = \sum_{i = 0}^{n} \frac{c_i}{s}x^i,
	\]
	where the coefficients $c_0, \ldots, c_n \in \mathfrak{S}$. It is not hard to see that the function $\phi$ is well defined. Verifying that $\phi$ is a semiring homomorphism is straightforward but tedious, so we leave the proof to the reader. Now let $f = \sum_{i = 0}^{n}\frac{d_i}{s_i}x^i$ be an arbitrary element of $(S^{-1}\mathfrak{S})[x]$. For each $j \in \llbracket 0,n \rrbracket$, set $s_j^* \coloneqq \frac{s_0 \cdots s_n}{s_j}$. Thus, 
	\[
		\phi\left(\frac{\sum_{i = 0}^{n} d_is_i^*x^i}{s_0 \cdots s_n}\right) = f,
	\]  
	which implies that the map $\phi$ is surjective. Supose that there exist $f,g \in \mathfrak{S}[x]$ and $s,s' \in S$ such that $\phi(\frac{f}{s}) = \phi(\frac{g}{s'})$. Write $f = \sum_{i = 0}^{n} c_ix^i$ and $g = \sum_{i = 0}^{n} d_ix^i$, where the coefficients $c_i$ and $d_i$ are elements of $\mathfrak{S}$ for every $i \in \llbracket 0,n \rrbracket$. Since $\phi(\frac{f}{s}) = \phi(\frac{g}{s'})$, we obtain that $\frac{c_i}{s} = \frac{d_i}{s'}$ for every $i \in \llbracket 0,n \rrbracket$. This, in turn, implies that $\frac{f}{s} = \frac{g}{s'}$. We can then conclude that the map $\phi$ is a semiring isomorphism.
\end{proof}

Next we show that if $\mathfrak{S}$ is a UFS then either $\mathfrak{S}$ is a UFD or $\mathfrak{S}$ is additively reduced, which sheds light upon \cite[Question~7.7]{BCG21}.

\begin{prop} \label{prop: UFS are either UFDs or additively reduced}
	Let $\mathfrak{S}$ be a UFS. Then either $\mathfrak{S}$ is an integral domain or it is additively reduced.
\end{prop}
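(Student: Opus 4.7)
The plan is to prove the contrapositive: if a UFS $\mathfrak{S}$ is not additively reduced, then $\mathfrak{S}$ must in fact be an integral domain. The key reduction is that it suffices to show $-1 \in \mathfrak{S}$, because then $-s = (-1)\cdot s \in \mathfrak{S}$ for every $s \in \mathfrak{S}$, so $\mathfrak{S} = \mathcal{G}(\mathfrak{S})$ is a ring, hence an integral domain by Lemma~\ref{lem:characterization of integral semirings}.

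So assume $\mathfrak{S}$ is not additively reduced and pick $u \in \mathfrak{S}\setminus\{0\}$ with $v := -u \in \mathfrak{S}$. First I would dispatch the easy subcase: if $u \in \mathfrak{S}^{\times}$, then $-1 = v\cdot u^{-1} \in \mathfrak{S}$, and by symmetry the same conclusion holds if $v \in \mathfrak{S}^{\times}$. Thus one may assume that both $u$ and $v$ are nonzero and nonunits in $\mathfrak{S}^*$.

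Using the UFS hypothesis, write $u = p_1\cdots p_n$ and $v = q_1\cdots q_m$ as products of prime elements of $\mathfrak{S}$. The decisive observation is that $u^2$ admits two prime factorizations inside $\mathfrak{S}$, namely
\[
u^2 \;=\; u\cdot u \;=\; p_1\cdots p_n\, p_1\cdots p_n \qquad\text{and}\qquad u^2 \;=\; (-u)(-u) \;=\; v\cdot v \;=\; q_1\cdots q_m\, q_1\cdots q_m,
\]
where the middle equality is read off in $\mathcal{G}(\mathfrak{S})$ and pulled back to $\mathfrak{S}$ since all the factors lie there. Unique factorization in $\mathfrak{S}^*$ forces these two factorizations to coincide as multisets of associate classes, so $n = m$ and, after a reordering, $q_i \simeq_{\mathfrak{S}} p_i$ for each $i$. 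Hence $v = w\,u$ for some $w \in \mathfrak{S}^{\times}$.

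To finish, $v + u = 0$ in $\mathcal{G}(\mathfrak{S})$ becomes $(1+w)u = 0$; since $\mathcal{G}(\mathfrak{S})$ is an integral domain and $u \neq 0$, we conclude $w = -1$, and therefore $-1 \in \mathfrak{S}^{\times} \subseteq \mathfrak{S}$, as required. The only real obstacle is noticing the pairing $u\cdot u = (-u)(-u)$, which gives two prime factorizations of the same element to which the UFS property can be applied; once that is in hand the argument collapses immediately, without any need for the localization machinery developed earlier.
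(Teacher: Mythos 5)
Your argument is correct, and it takes a genuinely different — and considerably more elementary — route than the paper's. The paper proves the proposition by contradiction using the full localization machinery: it localizes at $S = \mathfrak{S}\setminus\{0\}$ to obtain a field, shows that primes of $\mathfrak{S}$ remain prime in $\mathfrak{S}[x]$, invokes the descent direction of Theorem~\ref{UFS} together with the atomicity of $\mathfrak{S}[x]$ to conclude that $\mathfrak{S}[x]$ would be a UFS, and then derives a contradiction from an external result of Gotti and Polo on polynomial semidomains. Your proof bypasses all of this: the single observation that $u\cdot u = (-u)\cdot(-u)$ gives two prime factorizations of $u^2$ inside $\mathfrak{S}$, uniqueness forces $-u \simeq_{\mathfrak{S}} u$, and cancellation in the integral domain $\mathcal{G}(\mathfrak{S})$ then yields $-1 \in \mathfrak{S}^{\times}$, whence $\mathfrak{S} = \mathcal{G}(\mathfrak{S})$. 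Each step checks out: the reduction to producing $-1$ is sound since $-s = (-1)s$; the unit case is handled correctly; the passage from equality of the doubled multisets of associate classes to equality of the original multisets is valid (multiplicities simply halve); and $(1+w)u = 0$ with $u \neq 0$ in a domain does give $w = -1$. What your approach buys is self-containedness and brevity; what the paper's approach buys is a demonstration of how the localization and polynomial-extension theorems interact, which is part of the point of the paper. It would be worth noting explicitly that you are using Lemma~\ref{lem:characterization of integral semirings} both for the identity $(-u)(-u)=u^2$ in $\mathcal{G}(\mathfrak{S})$ and for the cancellation step, but that is a presentational remark, not a gap.
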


\begin{proof}
	By way of contradiction, assume that $\mathfrak{S}$ is neither an integral domain nor an additively reduced semidomain. Then there exists $s_0 \in \mathfrak{S}\setminus \{0\}$ such that $-s_0 \in \mathfrak{S} \setminus \{0\}$. Set $S \coloneqq \mathfrak{S} \setminus \{0\}$, and consider the semifield $S^{-1}\mathfrak{S}$. For $\frac{t}{s} \in S^{-1}\mathfrak{S}$, we have that $-\frac{t}{s} = \frac{t}{s} \cdot \frac{-s_0}{s_0}$, which implies that $S^{-1}\mathfrak{S}$ is indeed a field. Since $S^{-1}(\mathfrak{S}[x]) = (S^{-1}\mathfrak{S})[x]$ by Lemma~\ref{lemma: localizing commutes with taking polynomial extension}, we have that $S^{-1}(\mathfrak{S}[x])$ is a UFD. 
	
	Let $p$ be a prime element of $\mathfrak{S}$. We shall prove that $p$ is also a prime element of $\mathfrak{S}[x]$. Suppose towards a contradiction that there exist $f, g \in \mathfrak{S}[x]$ such that $p$ divides $f g$ in $\mathfrak{S}[x]$ and $p$ does not divide $f$ or $g$. Write
	\[
	f = s_nx^{m_n} + \cdots + s_0x^{m_0} \hspace{.5 cm} \text{ and } \hspace{.6 cm} g = s_t'x^{k_t} + \cdots + s_0'x^{k_0}
	\]
	for exponents $m_n, \ldots, m_0, k_t, \ldots, k_0 \in \nn_0$ and coefficients $s_n, \ldots, s_0,s_t', \ldots, s_0' \in S^*$. There is no loss in assuming that $m_n > \cdots > m_0$ and $k_t > \cdots > k_0$. Let $i, j$ be the largest indices such that $p \nmid_{\mathfrak{S}} s_i$ and $p \nmid_{\mathfrak{S}} s_j'$. Since $p$ divides $fg$ in $\mathfrak{S}[x]$, we have that $p$ divides 
	\[
	s_is_j' + \sum_{\substack{m_{\ell} + k_r = m_i + k_j}} s_\ell s_r' 
	\]
	in $\mathfrak{S}$, where either $\ell > i$ or $r > j$. By the maximality of the indices $i$ and $j$, we obtain that $p \mid_{\mathfrak{S}} s_is_j'$, which contradicts that $p$ is a prime element of $\mathfrak{S}$. Hence $S$ is a multiplicative subset of $\mathfrak{S}[x]$ generated by primes. Moreover, observe that the semidomain $\mathfrak{S}[x]$ is atomic by \cite[Theorem~4.1]{gottipolo}. By Theorem~\ref{UFS}, we can conclude that $\mathfrak{S}[x]$ is a UFS, which contradicts \cite[Theorem~6.6]{gottipolo}. Therefore, $\mathfrak{S}$ is either an integral domain or an additively reduced semidomain. 
\end{proof}

Following \cite{BCG21}, we say that a semidomain $\mathfrak{S}$ is \emph{bi-HFS} if both monoids $(\mathfrak{S},+)$ and $\mathfrak{S}^*$ are HFMs. In \cite{BCG21}, Baeth et al. posed the following question: Is $\nn_0$ the only subsemiring of $\rr_{\geq 0}$ that is bi-HFS? We now show that there exist a subsemiring of $\rr$ different from $\nn_0$ that is bi-HFS.
 
 \begin{example}
 	Let $\mathfrak{S} = \{0\} \cup \{f \in \zz[x] \mid f(0) > 0\}$. Clearly, $\mathfrak{S}$ is a semidomain satisfying that $\mathfrak{S}^{\times} = \{1\}$. Suppose towards a contradiction that there exists an irreducible $f \in \mathfrak{S}$ that is not irreducible in $\zz[x]$. Write $f = f_1f_2$ for non-unit elements $f_1, f_2 \in \zz[x]$. We may assume that $f_1 \notin S$, i.e., $f_1(0) < 0$. This implies that $f_2(0) < 0$. Consequently, we can write $f = (-f_1)(-f_2)$ with $-f_1, -f_2 \in \mathfrak{S}$. This contradiction shows that $\mathcal{A}(\mathfrak{S}) \subseteq \mathcal{A}(\zz[x])$. Since $\mathfrak{S}$ is atomic, we can conclude that $S^*$ is half-factorial. On the other hand, it is not hard to see that $\mathcal{A}_{+}(\mathfrak{S}) = \{f \in \mathfrak{S} \mid f(0) = 1\}$ which, in turn, implies that $(\mathfrak{S},+)$ is atomic. Note also that the length of an additive factorization of an element $g \in \mathfrak{S}$ is equal to $g(0)$. Hence $\mathfrak{S}$ is a bi-HFS.
 \end{example}

\bigskip
\section*{Acknowledgments}

During the preparation of this paper, the second author gratefully acknowledges the support of the University of California President's Postdoctoral Fellowship.

\bigskip

\end{document}